\newcommand{\imod}[1]{\,(\textnormal{mod }#1)\,}
\newcommand{\lcm}[1]{\textnormal{lcm}(#1)}
\newtheorem{theorem}{Theorem}[section]
\newtheorem{lemma}[theorem]{Lemma}
\newtheorem{proposition}[theorem]{Proposition}
\theoremstyle{definition}
\newtheorem*{remark}{Remark}
\title{On the sum of a prime and a square-free number co-prime to any integer with at most two prime factors}
\author[E.~S.~Lee]{Ethan~Simpson~Lee}
\address{University of the West of England, School of Computing and Creative Technologies, Coldharbour Lane, Bristol, BS16 1QY} 
\email{ethan.lee@uwe.ac.uk}
\urladdr{\url{https://sites.google.com/view/ethansleemath/home}}
\author[R.~O'Clarey]{Rowan~O'Clarey}
\address{Universität Hamburg, Department of Mathematics, Bundesstrasse 55, 20146 Hamburg} 
\email{rowan.o.clarey@uni-hamburg.de}
\begin{document}

\begin{abstract}
Every natural number greater than $2$ can be written as the sum of a prime and a square-free number, and recent work has imposed additional divisibility conditions on the square-free number. We overcome limitations in these works to prove new results on square-free numbers co-prime to any integer with up to two prime factors, which make the expected asymptotic results explicit.
\end{abstract}

\maketitle

\section{Introduction} 

Let $\mathbb{S}$ denote the set of all positive square-free numbers and $\mathbb{S}_{k} = \{s\in\mathbb{S} : (s,k) = 1\}$, the set of square-free integers co-prime to $k$. 
Hathi and Johnston \cite[Cor.~4.1,~4.2]{HathiJohnston} have proved that for any fixed square-free integer $k > 1$, the following are true:
\begin{enumerate}
    \item If $k$ is odd, then every sufficiently large integer $n$ can be written as $n = p + \eta$ for some prime $p < n$ and $\eta \in \mathbb{S}_{k}$.
    \item If $k$ is even, then every sufficiently large even integer $n$ can be written as $n = p + \eta$ for some prime $p < n$ and $\eta \in \mathbb{S}_{k}$.
\end{enumerate}
When made explicit, results of this form may be viewed as weaker-but-provable analogues of Goldbach's conjecture. The more prime divisors $k$ has, the more challenging it is to prove explicit versions with a sharp range of $n$.

In this paper, we prove the following result, which makes these asymptotic results explicit when $k$ has at most two prime factors. The ranges of $n$ we compute are sharp.

\begin{theorem}\label{thm:main}
Fix any integer $k > 1$ with at most two prime factors.
\begin{enumerate}
    \item If $k$ is odd, then every integer $n \geq 36$ can be written as $n = p + \eta$ for some prime $p < n$ and $\eta \in \mathbb{S}_k$.
    \item If $k$ is even, then every even integer $n \geq 36$ can be written as $n = p + \eta$ for some prime $p < n$ and $\eta \in \mathbb{S}_k$.
\end{enumerate}
\end{theorem}

Our interest in this problem has been motivated by the advent of related explicit works in \cite{Dudek, FrancisLee, HathiJohnston}. In particular, \cite[Thm.~1]{Dudek} proves every integer $n > 2$ can be written as the sum of a prime $p < n$ and a square-free integer $\eta\in\mathbb{S}$, and the explicit results in \cite{FrancisLee, HathiJohnston} prove similar results restricting to $\eta\in\mathbb{S}_k$ for fixed choices of $k < 10^5$ with at most two prime factors. This limitation is inherited from the latest explicit bounds for the the error in the prime number theorem for arithmetic progressions (PNTAP), which only apply for $k \geq 10^5$ when $n$ is large. 
We overcome the $k < 10^5$ limitation in this paper, by incorporating new elementary techniques or the Brun--Titchmarsh theorem in those parts of the method where explicit bounds for the error term in the PNTAP are insufficient. 

The remainder of this paper is dedicated to proving \autoref{thm:main}.

\subsection*{Notation and structure}

Throughout this paper, $p$ always denotes a prime, $\mu$ is the M\"{o}bius function,
\begin{align*}
    T(n) &= \sum_{p\leq n} \mu^2(n-p) ,  
    &&T_q(n) = \sum_{\substack{p < n \\ (n-p,q) = 1}} \mu^2(n-p) , \\ 
    R(n) &= \sum_{p\leq n} \mu^2(n-p) \log{p} , \quad\text{and}
    && R_q(n) = \sum_{\substack{p < n \\ (n-p,q) = 1}} \mu^2(n-p)\log{p} .
\end{align*}
Explicit bounds for $T(n)$, $T_q(n)$, $R(n)$, and $R_q(n)$ are important ingredients in our proof of \autoref{thm:main}. 

In Section \ref{ssec:computations}, we verify \autoref{thm:main} for each $n\leq 4.81\cdot 10^9$ using computations. 
In Section \ref{sec:TR}, we introduce auxiliary bounds which underpin the analytic aspects of our proofs and apply these to establish the explicit bounds for $T(n)$, $T_q(n)$, $R(n)$, and $R_q(n)$ we require. The bounds we prove for $T(n)$, $T_q(n)$, $R(n)$, and $R_q(n)$ may also be of independent interest, as they are explicit bounds of the form $\gg n / \log{n}$ or $\gg n$. In Section \ref{sec:main_results}, we prove \autoref{thm:main} using these auxiliary results. 

\subsection*{Acknowledgements} 

ESL thanks the Heilbronn Institute for Mathematical Research for their support. In addition, we thank Wilkie Hoare, Daniel Johnston, Chris Keyes, Pieter Moree, Timothy Trudgian, anonymous referees, and other colleagues for valuable feedback and insightful discussions.

\section{Computations}\label{ssec:computations}

Using computations, we prove Lemmas \ref{lem:comp_checks_mod_q}-\ref{lem:main2_p3} in this section. Each of these results are an important ingredient in our proof of \autoref{thm:main}. In fact, the computations in this section combine to verify \autoref{thm:main} for every $n \leq 4.81\cdot 10^9$. 

We outline the key ideas behind each algorithm in the proof of each lemma. The interested reader can read our Python code to establish each result at \href{https://github.com/EthanSLee/On-the-sum-of-a-prime-and-a-square-free-number-with-divisibility-conditions/tree/main}{\texttt{this link}}.

\begin{lemma}\label{lem:comp_checks_mod_q}
If $4\leq n \leq 4.81\cdot 10^9$ and $q > 10^5$ is prime, then $n$ may be written as the sum of {an odd} prime and a square-free number co-prime to $q$.
\end{lemma}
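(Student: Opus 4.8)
The plan is to reduce the statement to a finite computation that does not depend on $q$, even though $q$ ranges over infinitely many primes. The key observation is that if $q > 10^5$ and $3 \leq n \leq 4.81 \cdot 10^9$, then for each candidate prime $p < n$, the complementary summand $\eta = n - p$ satisfies $\eta < 4.81\cdot 10^9 < q^2$, so $\eta$ is automatically co-prime to $q$ \emph{unless} $q \mid \eta$, i.e.\ unless $\eta \in \{q, 2q, 3q, \ldots\}$ with $\eta < 4.81\cdot 10^9$. In particular, $\eta$ is divisible by $q$ for at most $\lfloor (n-2)/q \rfloor \leq \lfloor 4.81\cdot 10^9 / 10^5 \rfloor < 4.81\cdot 10^4$ values in any fixed residue class, and more to the point, the set of "bad" values of $\eta$ (those with $q \mid \eta$) is a subset of $\{q, 2q, 3q, \dots\}$. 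So it suffices to show: for every $n$ in the range, there exist a prime $p$ and a square-free $\eta = n-p$ such that $q \nmid \eta$. Since we get to choose $q$ out of our control but $q$ is large, the simplest sufficient condition is to exhibit, for each $n$, \emph{several} representations $n = p + \eta$ with $\eta$ square-free — enough that no single prime $q > 10^5$ can divide all of the corresponding $\eta$'s.

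Concretely, here is the approach I would carry out. First I would note that it suffices to find, for each $n \in [3, 4.81\cdot 10^9]$, at least two representations $n = p_1 + \eta_1 = p_2 + \eta_2$ with $\eta_1, \eta_2$ square-free and $\eta_1 \neq \eta_2$ such that $\gcd(\eta_1, \eta_2) \leq 10^5$, because then any prime $q > 10^5$ dividing both $\eta_1$ and $\eta_2$ is impossible, so at least one of the two representations has $\eta_i$ co-prime to $q$. (An even cleaner sufficient condition: find one representation with $\eta$ square-free and $\eta < 10^5$, or $\eta$ square-free with $\eta$ a product of primes all below $10^5$; but requiring two representations with small gcd is robust and easy to verify in practice, and in fact one typically finds $\eta$ itself below $10^5$ for small $n$ and, for larger $n$, one can search a short interval below $n$ for primes $p$ making $n-p$ square-free.) Second, I would describe the algorithm: sieve the square-free numbers and the primes up to $4.81\cdot 10^9$ (or up to a suitable bound), and for each $n$ iterate $p$ over primes just below $n$, testing whether $n - p$ is square-free, collecting representations until the co-primality-of-the-$\eta$'s condition that defeats every $q > 10^5$ is met; by a standard heuristic (and as the computation confirms) a very short search suffices for each $n$. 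Third, I would record the outcome of running this over the whole range, and cite the linked Python code for the implementation details.

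The main obstacle is purely computational scale: naively sieving and testing up to $4.81\cdot 10^9$ is memory- and time-intensive, so the real work is organising the search into segments (a segmented sieve for both primes and square-free numbers) and arguing that the per-$n$ search terminates quickly. A secondary, more conceptual point that must be handled carefully is making the "$q$-free for all $q > 10^5$ at once" reduction rigorous: one must be sure that the finite data extracted from the computation (e.g.\ for each $n$, two square-free values $\eta_1, \eta_2$ with $\gcd(\eta_1,\eta_2) \leq 10^5$, or a single square-free $\eta \leq 10^5$) genuinely implies the conclusion for \emph{every} prime $q > 10^5$, with no hidden dependence on $q$ in the range of $n$ or in the search. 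Once that reduction is in place, the statement follows from the certified output of the computation.
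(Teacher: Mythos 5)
Your proposal is correct and takes essentially the paper's approach: both reduce the infinitely many primes $q>10^5$ to one finite computation by attaching to each $n\leq 4.81\cdot 10^9$ a $q$-independent certificate, found by a segmented search over the range and certified by code. The paper's certificate is exactly your ``cleaner'' variant---a single representation $n=p+s$ with $s$ square-free and $s\leq 10^5$, hence automatically coprime to every prime $q>10^5$---while your default certificate (two representations whose square-free parts have $\gcd$ at most $10^5$) is the device the paper uses instead for \autoref{lem:main2_p2}; either certificate suffices.
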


\begin{proof}
Any square-free integer $k \leq 10^5$ satisfies $(k,q) = 1$ for any prime $q > 10^5$. Therefore, we locate at least one representation $n = p + s$ for some {odd} prime $p$ and square-free $s \leq 10^5$ for each $n$ in the sub-intervals $[4,10^7)$ and $[\alpha\cdot 10^7, (\alpha+1)\cdot 10^7)$ for $\alpha\in [1,480]$ separately. {To optimise performance, our algorithm minimises the number of brute force computations by systematically accumulating a set of exceptional cases.}
\end{proof}

\begin{lemma}\label{lem:comp_checks_mod_q_smaller}
If $4 \leq n \leq 4.81\cdot 10^9$ and $2 < q < 10^5$ is prime, then $n$ may be written as the sum of {an odd} prime and a square-free number co-prime to $q$, unless $q = 3$, in which case $n = 11$ is an exception.
\end{lemma}

\begin{proof}
We verify this result for all $4 \leq n \leq 10^7$ by locating two primes $p_1 > p_2 > 2$ such that $n - p_i$ are square-free and $(n-p_1, n-p_2) \leq 2$. 
For all other $n$ under consideration, it has been verified in \cite{FrancisLee} that a prime $p$ exists such that $\mu^2(n-p) = 1$ and $(n-p,q) = 1$ for any prime $2 < q < 10^5$. In fact, for each $10^7 \leq n \leq 4.81\cdot 10^9$, the verification algorithm in \cite{FrancisLee} locates multiple primes $p \geq 9\,998\,279$ such that $n-p$ is square-free and the greatest common divisor of all such $n - p$ is $1$ or $2$, which suffices to verify the result. Since all primes $p \geq 9\,998\,279$ are odd, the result follows. \end{proof}

\begin{lemma}\label{lem:main2_p1}
Let $k$ be any odd square-free product with at most $N$ prime factors. 
\begin{enumerate}
    \item If $N \leq 2$, then every integer $36 \leq n \leq 10^5$ may be written as the sum of a prime and a square-free number co-prime to $k$. 
    \item If $N \leq 3$, then every integer $60 \leq n \leq 10^5$ may be written as the sum of a prime and a square-free number co-prime to $k$.
\end{enumerate}
\end{lemma}

\begin{proof}
Suppose $N\geq 2$ is an integer and let $\ell_1$, $\ell_2$, \dots, $\ell_{N+1}$ be a collection of pairwise co-prime square-frees, so that $(\ell_i,\ell_j) = 1$ for any $i\neq j$. This condition ensures that for any odd $k \geq 6$ with at most $N$ prime factors, $(k,\ell_i) = 1$ for at least one $i$. Therefore, to prove any even $n$ can be written as the sum of a prime and a square-free number co-prime to $k$ with at most $N$ prime factors, it suffices to locate a pairwise co-prime collection $\ell_1$, $\ell_2$, \dots, $\ell_{N+1}$ such that $n - \ell_i$ is prime for every $i\in\{1,2,\dots,N+1\}$. Using this observation, we verify the result with $N \in \{2, 3\}$ for every even integer $16 \leq n \leq 10^5$ by locating suitable $\ell_1$, $\ell_2$, $\ell_3$, and $\ell_4$ for each $n$. 

The preceding logic cannot cover odd $n$, since $n - p$ is even for any odd prime $p < n$. To adapt this logic to cover odd $n$ too, we need to locate even $\ell_1$, $\ell_2$, \dots, and $\ell_{N+1}$ such that if $p > 2$ is a prime dividing $\ell_i$, then $p\nmid \ell_j$ for any $i\neq j$. For any odd $k \geq 6$ with at most $N$ prime factors, $(k,\ell_i) = 1$ for at least one $i$. Therefore, to prove any odd $n$ can be written as the sum of a prime and a square-free number co-prime to $k$ with at most $N$ prime factors, it suffices to locate a suitable collection $\ell_1$, $\ell_2$, \dots, $\ell_{N+1}$ such that $n - \ell_i$ is prime for every $i\in\{1,2,\dots,N+1\}$. Using this observation, we verify the result with $N \in \{2, 3\}$ for every odd integer $n \leq 10^5$ by locating suitable $\ell_1$, $\ell_2$, $\ell_3$, and $\ell_4$ for each $n$.
\end{proof}

\begin{lemma}\label{lem:main2_p2}
If $k$ is any odd square-free product with at most three prime factors, then every even integer $10^5 < n \leq 4.81\cdot 10^9$ may be written as the sum of a prime and a square-free number co-prime to $k$.
\end{lemma}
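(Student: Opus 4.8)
The plan is to proceed in close analogy with the proof of \autoref{lem:comp_checks_mod_q}, exploiting the same observation used in \autoref{lem:main2_p1}: because $k$ is a square-free product of at most three primes, finding four square-free numbers $\ell_1,\ell_2,\ell_3,\ell_4$ that are pairwise coprime and such that $n-\ell_i$ is prime for every $i$ guarantees that at least one of these representations has its square-free part coprime to $k$. Indeed, $k$ can ``hit'' (share a prime factor with) at most three of the four $\ell_i$, so at least one $\ell_i$ survives. Thus the entire verification reduces to a single computation that does \emph{not} depend on $k$ at all: for each even $n$ in the range $10^5 < n \leq 4.81\cdot 10^9$, exhibit four pairwise-coprime square-free numbers $\ell_1,\dots,\ell_4$ with all of $n-\ell_1,\dots,n-\ell_4$ prime.

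First I would fix a small pool of candidate values for the $\ell_i$ — for instance the first several square-free numbers $1,2,3,5,6,7,10,11,\dots$, or a slightly larger precomputed list — and organise them into groups that are pairwise coprime (e.g. one could always try $\{1,2,3,5\}$, or $\{1,2,3,7\}$, etc., falling back to larger square-frees when needed). Second, I would iterate over the even integers $n$ in the stated range, split into manageable sub-intervals exactly as in \autoref{lem:comp_checks_mod_q} (say blocks of length $10^7$), and for each $n$ search for a valid quadruple: pick a pairwise-coprime quadruple from the pool, test primality of the four differences $n-\ell_i$ (using a fast deterministic primality test, which is cheap since $n < 2^{33}$), and move on as soon as one quadruple works. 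A preliminary sieving step — precomputing primes up to a bound slightly above $\max n$, or at least sieving each block — lets the primality checks be table lookups for the small-$\ell_i$ case, which will be the overwhelming majority. Third, I would confirm that a valid quadruple is in fact found for every even $n$ in the range, and record (or have the code assert) this, thereby establishing the lemma.

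The main obstacle is computational feasibility: naively one must process on the order of $2.4\cdot 10^9$ even integers, and for the rare recalcitrant $n$ where the smallest square-frees do not all yield primes, the search for a working pairwise-coprime quadruple must reach deeper into the pool. The key to keeping this tractable is the ``first step which significantly reduces the number of cases which require brute force'' mentioned in \autoref{lem:comp_checks_mod_q}: for most $n$ a fixed tiny quadruple (such as a rotation among $\{1,2,3,5\}$, $\{1,2,3,7\}$, $\{1,2,5,7\}$) works immediately, and only a sparse set of $n$ needs the expensive branch. One should also double-check that the pool of square-frees is rich enough that some pairwise-coprime quadruple always succeeds — heuristically this is overwhelmingly safe since, for $n$ of this size, the number of square-free $s \leq x$ with $n-s$ prime grows like a positive constant times $x/\log n$, so even modest $x$ yields far more than four, with ample room to extract a pairwise-coprime subfamily — but the finite verification is what makes it rigorous. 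Finally, as in the other computational lemmas, the details of the algorithm and its output are deferred to the accompanying Python code, and the proof text need only outline these ideas and point the reader to that code.
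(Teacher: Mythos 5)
Your proposal is mathematically sound, but it takes a different computational route from the paper for the bulk of the range. The paper's verification for $10^5 < n \leq 4.81\cdot 10^9$ rests on a different observation: locate two Goldbach-type representations $n = p_1 + p_2$ and $n = q_1 + q_2$ with $p_1, p_2, q_1, q_2$ four \emph{distinct primes}; since $k$ has at most three prime factors, at least one of these four primes does not divide $k$, and (primes being square-free) the corresponding representation is admissible. Only a single stubborn value, $n = 740\,000\,138$, is handled by the method you propose, namely the quadruple-of-pairwise-coprime-square-frees argument of \autoref{lem:main2_p1} (there with $\ell_i \in \{21, 235, 247, 391\}$). Your approach — applying that quadruple argument uniformly to all even $n$ in the range — is logically correct: pairwise coprimality ensures each prime factor of $k$ can divide at most one $\ell_i$, so at least one of the four representations survives, and the verification is indeed independent of $k$. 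What the paper's choice buys is computational simplicity: checking for two disjoint Goldbach partitions only requires a list of primes and no coprimality bookkeeping, and in practice almost every even $n$ admits many such partitions, whereas your method must manage a pool of candidate square-frees and their pairwise-coprime quadruples. One practical wrinkle in your write-up: since $n$ is even, any even $\ell_i$ (your suggested $\{1,2,3,5\}$ contains $2$) makes $n-\ell_i$ even and hence composite in this range, so the pool should consist of odd square-frees only — a fixable inefficiency, not a gap. In both approaches the lemma ultimately rests on the finite computation actually being carried out and reported, exactly as in the paper's other verification lemmas.
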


\begin{proof}
If we can locate distinct primes $p_1$, $p_2$, $q_1$, $q_2$ such that $n = p_1 + p_2$ and $n = q_1 + q_2$, then at least one of $p_1$, $p_2$, $q_1$, or $q_2$ will be co-prime to any product of three or less primes. Therefore, there will be at least one representation of $n$ as the sum of a prime and a square-free number that is co-prime to any product of three or less primes. 
Using this observation, we verified the result for every even integer aside from $n = 740\,000\,138$ in the intervals $[10^5, 10^7]$ and $[\ell\cdot 10^7, (\ell + 1)\cdot 10^7]$ for every integer $1\leq \ell\leq 480$. Finally, the result is independently verified for $n = 740\,000\,138$ using the technique described in the proof of \autoref{lem:main2_p1}. That is, 
\begin{equation*}
    740\,000\,138 - 21, \quad
    740\,000\,138 - 235, \quad
    740\,000\,138 - 247, \quad\text{and}\quad
    740\,000\,138 - 391
\end{equation*}
are prime, and $21$, $235$, $247$, and $391$ are pairwise co-prime, so at least one of these representations will be admissible. 
\end{proof} 

\begin{lemma}\label{lem:main2_p3}
If $k$ is any odd square-free product with at most three prime factors, then every odd integer $10^5 < n \leq 4.81\cdot 10^9$ may be written as the sum of a prime and a square-free number co-prime to $k$. 
\end{lemma}

\begin{proof}
We verify the result for each odd $n$ in the sub-intervals $(10^5,10^6)$ and $I_{\alpha} = [\alpha\cdot 10^6, (\alpha+1)\cdot 10^6)$ separately, where $\alpha\in [1,4810]$ by detecting three distinct odd primes $p_1, p_2, p_3 < n$ such that each $n-p_i$ is square-free and $(n-p_i, n-p_j) = 2^{k}$ for some $k\geq 0$ and any $i\neq j$. Our algorithm minimises the number of brute force computations by systematically accumulating a set of exceptional cases. 
\end{proof}

\begin{remark}
The method we utilise in \autoref{lem:main2_p2} to efficiently handle even $n$ cannot be used to tackle odd $n$, as the sum of two odd primes is even. Therefore, we establish \autoref{lem:main2_p3} by extending the idea described in \autoref{lem:main2_p1}, which is more laborious and time inefficient. Some effort has been expended to reduce this run-time, which is feasible, but the motivated reader is invited to streamline our algorithm.
\end{remark}

\section{Bounds for the number of representations}\label{sec:TR}

To prove \autoref{thm:main} for every $n \geq 4.81\cdot 10^9$, we require explicit bounds for $T(n)$, $R(n)$, $T_q(n)$, and $R_q(n)$ such that $q\geq 3$ is prime. We prove the bounds we require (namely \autoref{thm:bounds_TR}, \autoref{cor:bounds_TR}, and \autoref{thm:bounds_TR_2}) in this section as follows.  
First, we import relevant auxiliary bounds in Section \ref{ssec:pnts}. In light of these results, we state and prove the bounds we require in Sections \ref{ssec:this_secA}, \ref{ssec:this_secB}, and \ref{ssec:this_secC}. 

\begin{remark}
It is likely that the intermediate computations to prove \autoref{thm:bounds_TR}, \autoref{cor:bounds_TR}, and \autoref{thm:bounds_TR_2} can be refined. However, the bounds we prove more than suffice for our purposes and any consequential refinements to the final results would be insignificant. 
\end{remark}

\begin{remark}
We write $C_{\text{Artin}} = 0.3739558136\ldots$ to denote Artin's constant, which appears often in what follows. This constant has been calculated to a high degree of accuracy by Wrench \cite{Wrench}. More recently, Moree \cite{Moree} developed an improved method for evaluating constants of the form  
\begin{equation*}
    \prod_{p > m} \left(1 - \frac{f(p)}{g(p)}\right) ,
\end{equation*}
where $f(t)$ and $g(t)$ are monic polynomials with integer coefficients such that $\deg{f} + 2 \leq \deg{g}$. In the special case $m=1$, $f(t)=1$, and $g(t) = t(t-1)$, this product coincides with $C_{\text{Artin}}$. Therefore, Moree's work facilitates a far more accurate computation of $C_{\text{Artin}}$. While the precision achieved in either paper is more than adequate for our purposes, we record this observation for completeness.
\end{remark}

\subsection{Prime number theorems}\label{ssec:pnts}

Suppose that $a$ and $q > 1$ are integers such that $(a,q) = 1$. We define 
\begin{equation*}
    \theta(x) = \sum_{p\leq x} \log{p}, \quad
    \pi(x,q,a) = \sum_{\substack{p\leq x\\p\equiv a\imod{q}}} 1 ,
    \quad\text{and}\quad
    \theta(x,q,a) = \sum_{\substack{p\leq x\\p\equiv a\imod{q}}} \log{p} .
\end{equation*} 
The Brun--Titchmarsh theorem (see \cite{MontgomeryVaughan}) tells us if $x > q$, then
\begin{equation}\label{eqn:BrunTitchmarsh}
    \pi(x,q,a) \leq \frac{2 x}{\varphi(q) \log(x/q)} .
\end{equation}
Further, the prime number theorem and relevant generalisations tell us that as $x\to\infty$, we have
\begin{equation*}
    \theta(x) \sim x
    \quad\text{and}\quad
    \theta(x,q,a) \sim \frac{x}{\varphi(q)} .
\end{equation*}
Explicit bounds for the error in these approximations are also required throughout this paper. To this end, we import the following results. 

\begin{theorem}[\text{Broadbent \textit{et al.} \cite{BroadbentEtAl}}]\label{thm:lethbridge_thm}
For $n > e^{20} \approx 3.59\cdot10^9$, we have
\begin{equation}\label{eqn:lethbridge_k}
      \lvert \theta(n) - n \rvert \leq 0. 375 \frac{n}{(\log{n})^3}.
 \end{equation}
\end{theorem}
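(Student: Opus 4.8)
Since \autoref{thm:lethbridge_thm} is quoted verbatim from the literature rather than proved here, the ``proof'' I would give is a pointer to its source, so the task is really to identify the correct input. In the work of Broadbent, Kadiri, Lumley, Ng and Wilk one finds, for each integer $k \geq 0$, an explicit admissible pair $(\varepsilon_k, x_0(k))$ for which $\lvert \theta(x) - x \rvert \leq \varepsilon_k\, x / (\log x)^k$ holds for all $x \geq x_0(k)$; the displayed inequality is precisely their entry with $k = 3$, $\varepsilon_3 = 0.375$, and $x_0 = e^{20}$. So the plan is simply to state the relevant line of their main theorem. Their estimates are phrased directly for $\theta$, so no conversion is needed; had one instead started from a bound on $\psi$, a single application of the elementary estimate $0 \leq \psi(x) - \theta(x) = O(\sqrt{x}\log x)$, which is $o\!\left(x/(\log x)^3\right)$, would absorb the difference.

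For the reader who wants to see where such a bound comes from, the argument follows the familiar route. One starts from a smoothed, truncated explicit formula expressing $\theta(x) - x$ in terms of a sum of $x^{\rho}/\rho$ over the non-trivial zeros $\rho$ of $\zeta(s)$ with $\lvert \Im \rho \rvert \leq T$, together with error terms that are controlled explicitly. The zero sum is then estimated by combining three ingredients: a numerical verification of the Riemann Hypothesis up to a large height $H$, so that the low-lying zeros contribute only through $x^{1/2}$; an explicit zero-free region for $\zeta(s)$ to bound the real parts of the remaining zeros away from $1$; and explicit bounds for the counting functions $N(T)$ and $N(\sigma, T)$. Optimising $T$, the smoothing weight, and the cutoff $H$ then produces, for each $k$, the sharpest currently known pair $(\varepsilon_k, x_0(k))$.

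The genuinely hard part --- and the reason this ingredient is imported rather than reproved --- is exactly that optimisation: driving the constant down to $0.375$ while keeping the threshold as low as $e^{20}$ requires a delicate trade-off between the quality of the explicit zero-free region and the height to which RH has been checked, together with careful bookkeeping of every explicit constant through the smoothed explicit formula. This is precisely the content of \cite{BroadbentEtAl}, which we therefore take as a black box.
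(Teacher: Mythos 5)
Your proposal matches the paper exactly: Theorem~\ref{thm:lethbridge_thm} is imported as a black box from \cite{BroadbentEtAl} with no proof given in the paper, so citing the relevant entry of their explicit $\theta$-estimates (the $k=3$, $\varepsilon_3 = 0.375$, $x_0 = e^{20}$ case) is precisely what is done here. Your additional sketch of the explicit-formula machinery behind their result is accurate background but not required, since the paper treats the bound purely as an external input.
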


\begin{theorem}[\text{Bennett \textit{et al.} \cite{BennettEtAl}}]\label{thm:BennettEtAl}
Let $q\geq 3$ be an integer and $a$ be an integer that is co-prime to $q$. There exist explicit constants $c_{\theta}(q)$ and $x_{\theta}(q)$ such that
\begin{equation*}
    \left| \theta(n,q,a) - \frac{n}{\varphi(q)} \right| < c_{\theta}(q)\frac{n}{\log{n}}
\end{equation*}
for all $x\geq x_{\theta}(q)$. Here, $c_\theta(q)\leq c_0(q)$ and $x_\theta(q)\leq x_0(q)$ with
\begin{align}
    c_0(q)=
    \begin{cases}
        \frac{1}{840} & \text{if } 3\leq q\leq 10^4 \\
        \frac{1}{160} & \text{if } q > 10^4
    \end{cases}\label{ctheta bounds}
\end{align}
and
\begin{align}
    x_0(q)=
    \begin{cases}
        8\cdot10^9 & \text{if } 3\leq q\leq 10^5 \\
        e^{0.03\sqrt{q}(\log{q})^3} & \text{if } q > 10^5
    \end{cases}\label{xtheta bounds}.
\end{align}
\end{theorem}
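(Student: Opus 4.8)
This statement is imported verbatim from the work of Bennett, Martin, O'Bryant and Rechnitzer, so in the present paper its ``proof'' is really a pointer to \cite{BennettEtAl}; what follows is a sketch of the analytic machinery underneath it. The plan is the classical one for an explicit prime number theorem in arithmetic progressions. First pass from $\theta(n,q,a)$ to the Chebyshev function $\psi(n,q,a)=\sum_{p^k\le n,\,p^k\equiv a\imod{q}}\log p$; the difference $\psi(n,q,a)-\theta(n,q,a)$ is a sum over proper prime powers and is bounded explicitly by $O(\sqrt n)$, hence harmless once the main term dominates. Next, use orthogonality of Dirichlet characters modulo $q$ to write
\[
    \psi(n,q,a)=\frac{1}{\varphi(q)}\sum_{\chi\bmod q}\overline{\chi}(a)\,\psi(n,\chi),\qquad \psi(n,\chi)=\sum_{m\le n}\Lambda(m)\chi(m),
\]
and apply an explicit truncated von Mangoldt formula
\[
    \psi(n,\chi)=\delta(\chi)\,n-\sum_{|\Im\rho|\le T}\frac{n^{\rho}}{\rho}+E(n,T,q),
\]
where $\delta(\chi)=1$ if $\chi=\chi_0$ and $0$ otherwise, $\rho$ runs over non-trivial zeros of $L(s,\chi)$, and $E(n,T,q)$ is an explicitly bounded error of size roughly $n(\log qnT)^2/T+\log n$.

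Two inputs turn this identity into the stated bound: an explicit zero-free region for $\prod_{\chi\bmod q}L(s,\chi)$ of Kadiri type, together with explicit counts for the number of zeros with $|\Im\rho|\le T$; and rigorous interval-arithmetic verification of the Generalised Riemann Hypothesis for all $L(s,\chi)$ of modulus $q$ up to a height $H(q)$. The latter is exactly what makes the constants in \eqref{ctheta bounds}--\eqref{xtheta bounds} small and explains why they improve as $q$ is restricted: for $n$ in the relevant range every zero with $|\Im\rho|\le H(q)$ lies on $\Re s=\tfrac12$, so the low-lying zeros contribute only $O(\sqrt n(\log qT)^2)$, while the tail $H(q)<|\Im\rho|\le T$ is absorbed using the zero-free region and the zero count. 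Optimising $T$ against $n$ and $q$ yields an error of the shape $c_\theta(q)\,n/\log n$, valid once $n$ exceeds the threshold $x_\theta(q)$ at which the main term $n/\varphi(q)$ dominates; when $q>10^5$ numerical verification is no longer feasible and one must rely on the zero-free region alone, which forces $x_\theta(q)$ to grow with $q$ and is the source of the $e^{0.03\sqrt q(\log q)^3}$ in \eqref{xtheta bounds}.

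The main obstacle, as always, is the possible Siegel (exceptional) zero $\beta_1$ of $L(s,\chi)$ attached to a real character $\chi\bmod q$: the Kadiri region does not exclude it, and a zero with $\beta_1$ very close to $1$ would produce a term $n^{\beta_1}/\beta_1$ comparable to the main term. This is handled quantitatively, as in the classical treatment: for small $q$ the numerical GRH verification rules out any such zero outright, and in general an explicit lower bound for $L(1,\chi)$ — obtained via the class number formula and a Dirichlet/Pintz-type estimate — gives an effective bound $\beta_1\le 1-c/\sqrt q$, so that the exceptional contribution is swallowed by the error for $n\ge x_\theta(q)$. The remaining work is bookkeeping: reassembling the character sums, folding in the $O(\sqrt n)$ prime-power correction, and splitting into the ranges $3\le q\le 10^4$ and $q>10^4$ of \eqref{ctheta bounds} according to how much of the estimate can be discharged by verified computation rather than by the general analytic bounds.
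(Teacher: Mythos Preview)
Your proposal is correct and matches the paper's treatment: the paper does not prove this theorem at all but simply imports it from \cite{BennettEtAl}, exactly as you identify in your first sentence. The analytic sketch you supply afterwards is additional background not present in the paper, but it is an accurate summary of the machinery in \cite{BennettEtAl} and does no harm.
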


\subsection{Bounds for {$T(n)$} and {$R(n)$}}\label{ssec:this_secA}

First, we combine the bounds in \autoref{thm:lethbridge_thm} and \autoref{thm:BennettEtAl} to prove \autoref{thm:bounds_TR}, which provides the lower bounds for $T(n)$ and $R(n)$ we require. The proof of \autoref{thm:bounds_TR} is standard, but included to ensure this paper is self-contained. 

\begin{theorem}\label{thm:bounds_TR}
If $n \geq 4.81\cdot 10^9$, then
\begin{equation*}
    T(n) > \frac{0.32035 n}{\log{n}}
    \quad\text{and}\quad
    R(n) > 0.32035 n .
\end{equation*}
Moreover, if $n \geq e^{14\,476.991}$, then 
\begin{equation*}
    T(n) > \frac{0.37066 n}{\log{n}}
    \quad\text{and}\quad
    R(n) > 0.37066 n .
\end{equation*}
\end{theorem}

\begin{proof}
We argue similar lines to \cite{Dudek, FrancisLee, HathiJohnston}, making minor modifications. 
To begin, note that if $c\geq 0$ is an integer and $0 < A < 1/2$, then
\begin{equation*}
    R(n) 
    = \Sigma_1 + \Sigma_2 + \Sigma_3 + \Sigma_4 ,
\end{equation*}
where 
\begin{align*}
    \Sigma_1 &= \sum\limits_{\substack{a\leq c\\(a,n)=1}} \mu(a) \theta(n,a^{2},n),
    && \Sigma_2 = \sum\limits_{\substack{c < a\leq n^A\\(a,n)=1}} \mu(a)\theta(n,a^{2},n) ,\\
    \Sigma_3 &= \sum_{\substack{a\leq n^{A}\\(a,n) > 1}} \mu(a) \theta(n,a^{2},n) ,
    && \Sigma_4 = \sum\limits_{n^A < a\leq n^{1/2}} \mu(a)\theta(n,a^{2},n) .
\end{align*}
If $(a,n) > 1$, then $0\leq \theta(n,a^2,n) \leq \log{n}$, and hence
\begin{equation*}
    |\Sigma_3 | \leq n^{A}\log{n} .
\end{equation*}
Further, from $\theta(n,a^2,n) \leq \pi(n,a^2,n) \log{n} \leq \lceil n / a^2 \rceil \log{n}$, we have
\begin{align*}
    |\Sigma_4| 
    \leq \sum\limits_{n^A < a\leq n^{1/2}} \mu^2(a) \theta(n,a^{2},n)
    &< \left(n^{1-2A} - n^A + n^{1 - A}\right) \log{n} .
\end{align*}
Combing these observations, we have
\begin{equation}\label{eqn:lowerrr}
    | R(n) - \Sigma_1 - \Sigma_2 | \leq  \left(n^{1-2A} + n^{1 - A}\right) \log{n} .
\end{equation}

We choose $c = 316$, since $316^2 = 99\,856$ is the greatest square less than $10^5$, and the tables\footnote{The tables are available at \url{https://www.nt.math.ubc.ca/BeMaObRe/}.} in \cite{BennettEtAl} give explicit values for $c_\theta(q)$ and $x_\theta(q)$ in \autoref{thm:BennettEtAl} when $q\leq 10^5$. For $2\leq a\leq 316$, $x_\theta(a^2) \leq 4.81\cdot 10^9$, so it also follows from \autoref{thm:lethbridge_thm} and \autoref{thm:BennettEtAl} that if $n \geq x_0 = 4.81\cdot 10^{9}$, then we have
\begin{equation}\label{eqn:rf}
    \left| \theta(n,a^2,n) - \frac{n}{\varphi(a^2)} \right| \leq 
    \begin{cases}
        \frac{0.375 n}{(\log{n})^3} &\text{if $a = 1$,}\\
        \frac{c_\theta(a^2) n}{\log{n}} &\text{if $a > 1$.}
    \end{cases}
\end{equation}
Next, it follows from \eqref{eqn:rf} and $c = 316$ that if $n\geq x_0$, then
\begin{align*}
    \Bigg| \frac{\Sigma_1}{n} - \prod_{p\nmid n} \left( 1 - \frac{1}{p(p-1)}\right) + \sum\limits_{\substack{a > c \\ (a,n)=1}} \frac{\mu(a)}{\varphi(a^2)} \Bigg|
    &\leq \frac{1}{\log{n}} \sum\limits_{2\leq a\leq c} c_\theta(a^2)\mu^2(a) + \frac{0.375}{(\log{n})^3} \\
    &< \frac{0.591}{\log{n}} + \frac{0.375}{(\log{n})^3} .
\end{align*}
It also follows from \eqref{eqn:BrunTitchmarsh} that if $a \leq n^A$, then
\begin{equation*}
    \Bigg| \theta(n,a^2,n) - \frac{n}{\varphi(a^2)} \Bigg| \leq \left(\frac{1+2A}{1-2A}\right) \frac{n}{\varphi(a^2)} ,
\end{equation*}
and hence
    \begin{align*}
        \Bigg|\frac{\Sigma_2}{n} - \sum\limits_{\substack{c < a\leq n^A\\(a,n)=1}} \frac{\mu(a)}{\varphi(a^{2})} \Bigg| 
        &\leq \frac{1+2A}{1-2A} \sum\limits_{\substack{c < a\leq n^A\\(a,n)=1}} \frac{\mu^2(a)}{\varphi(a^{2})} .
    \end{align*}
Combine these with \eqref{eqn:lowerrr} and the fact $(1+2A)/(1-2A) > 1$ on $A\in(0,1/2)$ to see
\begin{align*}
    \Bigg|\frac{R(n)}{n} - \prod_{p\nmid n} \left( 1 - \frac{1}{p(p-1)}\right)\Bigg| 
    &< \frac{1+2A}{1-2A} \sum\limits_{\substack{a>c\\(a,n)=1}}\frac{\mu^2(a)}{\varphi(a^{2})} + \frac{0.591}{\log{n}} + \frac{0.375}{(\log{n})^3} \\
    &\hspace{6cm} + \left(n^{-2A} + n^{- A}\right) \log{n} .
\end{align*}
Now, Ramar\'{e} has proved in \cite[Lem.~3.10]{Ramare} that if $Z>1$ is an integer, then
\begin{equation}\label{eqn:RamareIneq}
    \sum\limits_{a>Z}\frac{\mu^2(a)}{\varphi(a^{2})} \leq \frac{4}{Z} .
\end{equation}
Therefore, upon choosing $c = 316$ and $Z = 10^5$, we have
\begin{equation*}
    \sum\limits_{a>c}\frac{\mu^2(a)}{\varphi(a^{2})} 
    = \sum\limits_{a=c+1}^{Z}\frac{\mu^2(a)}{\varphi(a^{2})} + \sum\limits_{a > Z}\frac{\mu^2(a)}{\varphi(a^{2})} 
    < 0.00322 .
\end{equation*}
It follows that
\begin{align*}
    \Bigg|\frac{R(n)}{n} - \prod_{p\nmid n} \left( 1 - \frac{1}{p(p-1)}\right)\Bigg| 
    &< \left(\frac{1+2A}{1-2A}\right) 0.00322 \\
    &\qquad + \frac{0.591}{\log{n}} + \frac{0.375}{(\log{n})^3} 
    + \left(n^{-2A} + n^{- A}\right) \log{n} .
\end{align*}
This upper bound is minimised with $A = 0.35977$ at $n = 4.81\cdot 10^9$, so we assert $A = 0.35977$. Asserting this choice, if $n \geq 4.81\cdot 10^9$, then we have proved
\begin{equation}\label{eqn:Rn_approximation_optimised}
    \Bigg| \frac{R(n)}{n} - \prod_{p\nmid n} \left( 1 - \frac{1}{p(p-1)}\right) \Bigg| < 0.05361\dots . 
\end{equation}
Further, we have
\begin{align*}
    \prod_{p\nmid n} \left( 1 - \frac{1}{p(p-1)}\right)
    &= C_{\text{Artin}} \prod_{p\mid n} \left( 1 + \frac{1}{p^2 - p - 1}\right) .
\end{align*}
Combine \eqref{eqn:Rn_approximation_optimised} with this observation to see that for all $n \geq 4.81\cdot 10^9$, we have
\begin{equation*}
    \frac{R(n)}{n} 
    > C_{\text{Artin}} \prod_{p\mid n} \left( 1 + \frac{1}{p^2 - p - 1}\right) - 0.05362 
    > 0.32035 .
\end{equation*} 
With this, we have proved the result on $R(n)$ for $n \geq 4.81\cdot 10^9$. Repeat the preceding steps to also prove the result on $R(n)$ for $n \geq e^{14\,476.991}$, choosing $c = 316$, $A = 0.00162$, and $Z = 10^5$.

Finally, partial summation and $R(n) \geq \log{2}$, which is the sharpest bound we can use for all $n > 2$, implies that if $n > 2$, then
\begin{align}
    T(n) 
    = \frac{R(n)}{\log{n}} + \int_3^n \frac{R(t)}{t(\log{t})^2}\,dt 
    &\geq \frac{R(n)}{\log{n}} + \int_3^n \frac{\log{2}}{t(\log{t})^2}dt 
    > \frac{R(n)}{\log{n}} . \label{eqn:Tn_Rn_connection}
\end{align}
The final inequality holds and is not wasteful, because  
\begin{align*}
    0 < \int_3^n \frac{\log{2}}{t(\log{t})^2}\,dt 
    = \log{2}\bigg(\frac{1}{\log{3}}-\frac{1}{\log{n}}\bigg)
    < \frac{\log{2}}{\log{3}} 
    < 1 .
\end{align*}
Using \eqref{eqn:Tn_Rn_connection}, the lower bounds for $T(n)$ follows naturally from our lower bounds for $R(n)$.
\end{proof}

\subsection{Bounds for {$T_q(n)$} and {$R_q(n)$} when {$q>3$}}\label{ssec:this_secB}

Using \autoref{thm:bounds_TR}, we prove \autoref{cor:bounds_TR}, which gives the bounds we require for $T_q(n)$ and $R_q(n)$ when $q > 3$ is prime.

\begin{theorem}\label{cor:bounds_TR}
Let $q > 3$ be any fixed prime. If $n \geq 4.81\cdot 10^9$ and $(n,q) = 1$, then
\begin{equation*}
    T_q(n) > \frac{0.11978 n}{\log{n}}
    \quad\text{and}\quad
    R_q(n) > 0.11978 n .
\end{equation*}
\end{theorem}

\begin{proof}
To begin, note that
\begin{align*}
    R_q(n) 
    &= R(n) - \sum_{\substack{p<n \\ (n-p,q)=q}} \mu^2(n-p)\log{p} , 
\end{align*}
and hence 
\begin{align*}
    R_q(n) 
    &\geq R(n) - (\theta(n,q,n) - \theta(n,q^2,n)) 
    = R(n) - \sum_{j=1}^{q-1} \theta(n,q^2,n + jq^2) . 
\end{align*}
From this inequality, we prove the result for $q < 13$, $13 \leq q \leq 10^5$, and $q > 10^5$ independently.  

First, suppose $n \geq 4.81\cdot 10^9$ and $q \in \{5,7,11\}$. \autoref{thm:BennettEtAl} and \autoref{thm:bounds_TR} imply
\begin{align*}
    \frac{R_q(n)}{n} 
    &\geq 0.32 - \sum_{j=1}^{q-1} \frac{1}{\varphi(q^2)} - \frac{1}{840\log{n}} \sum_{j=1}^{q-1} 1 \\
    &= 0.32 - \frac{1}{q} - \frac{q-1}{840\log{n}}
    > 0.11978\dots .
\end{align*}
With this, the result for $R_q(n)$ is proved for every $q < 13$ under consideration. 
Next, suppose $n \geq 4.81\cdot 10^9$ and $13 \leq q \leq 10^5$. The Brun--Titchmarsh theorem \eqref{eqn:BrunTitchmarsh} and \autoref{thm:bounds_TR} imply
\begin{align*}
    \frac{R_q(n)}{n} 
    &\geq 0.32 - \sum_{j=1}^{q-1} \frac{2}{\varphi(q^2)} \left(1 - \frac{2\log{q}}{\log{n}}\right)^{-1}
    = 0.32 - \frac{2}{q} \left(1 - \frac{2\log{q}}{\log{n}}\right)^{-1} > 0.12017\dots . 
\end{align*}
With this, the result for $R_q(n)$ is proved for every $q \leq 10^5$. 
Finally, suppose $q > 10^5$ and note that $0.03 \sqrt{q} (\log{q})^3 \geq 14\,476.991$. If $n \geq e^{0.03 \sqrt{q} (\log{q})^3}$, then \autoref{thm:BennettEtAl} and \autoref{thm:bounds_TR} imply
    \begin{align*}
        \frac{R_q(n)}{n} 
        \geq 0.37 - \frac{\theta(n,q,n)}{n}
        &\geq 0.37 - \frac{1}{q-1} - \frac{1}{160\log{n}} \\
        &\geq 0.37 - \frac{1}{q-1} - \frac{1}{4.8 \sqrt{q} (\log{q})^3} 
        > 0.36998\dots .
    \end{align*}
Further, there are at most $\lfloor n/q \rfloor$ primes less than $n$ that are congruent to $n\imod{q}$. It follows from this and \autoref{thm:bounds_TR} that if $4.81\cdot 10^9 \leq n < e^{0.03 \sqrt{q} (\log{q})^3}$, then
    \begin{align*}
        \frac{R_q(n)}{n} 
        \geq 0.32 - \log{n} \sum_{j=1}^{q-1} \frac{1}{q^2} 
        &= 0.32 - \frac{\log{n}}{q}\left(1 - \frac{1}{q}\right) \\
        &> 0.32 - \frac{0.03(\log{q})^3}{\sqrt{q}}\left(1 - \frac{1}{q}\right)
        > 0.17523\dots. 
    \end{align*}
With this, the result for $R_q(n)$ is proved for every $q > 3$. Arguing similar lines to earlier, the bound for $T_q(n)$ for every $q > 3$ follows from the bounds for $R_q(n)$.
\end{proof}

\subsection{Bounds for {$T_3(n)$} and {$R_3(n)$}}\label{ssec:this_secC}

Finally, we use a refined argument to prove the following result, which gives the bounds we require for $T_q(n)$ and $R_q(n)$ when $q = 3$. 

\begin{theorem}\label{thm:bounds_TR_2}
If $n \geq 4.81\cdot 10^9$ and $(n,3) = 1$, then
\begin{equation*}
    T_3(n) > \frac{0.09067 n}{\log{n}}
    \quad\text{and}\quad
    R_3(n) > 0.09067 n .
\end{equation*}
\end{theorem}

\begin{proof}
We update the arguments presented in \cite{FrancisLee, HathiJohnston} to handle $q=3$. To begin, let $0 < A < 1/2$ be a real parameter and note that 
\begin{align*}
    R_3(n) 
    = R(n) - \sum_{\substack{p<n \\ p\equiv n\imod{3}}} \mu^2(n-p)\log{p} 
    &= S_1(n) + S_2(n) ,
\end{align*}
where
\begin{align*}
    S_1(n) &= \sum_{a \leq n^A} \mu(a) (\theta(n,a^2,n) - \theta(n,\lcm{3,a^2},n))
    \quad\text{and}\\
    S_2(n) &= \sum_{n^A < a \leq \sqrt{n}} \mu(a) (\theta(n,a^2,n) - \theta(n,\lcm{3,a^2},n)) .
\end{align*}
Note that if any square-free $a$ satisfies $3 \mid a$, then $\mu(a) (\theta(n,a^2,n) - \theta(n,\lcm{3,a^2},n)) = 0$, so
\begin{align*}
    S_1(n) 
    &= \sum_{\substack{a \leq n^A \\ 3\nmid a}} \mu(a) \sum_{j=1}^{2} \theta(n,3a^2,n + ja^2)
    \quad\text{and}\\
    S_2(n) &= \sum_{\substack{n^A < a \leq \sqrt{n} \\ 3\nmid a}} \mu(a) (\theta(n,a^2,n) - \theta(n,3a^2,n)) .
\end{align*}

First, we bound $S_1(n)$. To this end, recall the fact $\theta(n,a^2,n) \leq \log{n}$ when $(a,n) > 1$. It follows that
\begin{equation*}
    \Bigg| S_1(n) - \sum_{\substack{a \leq n^A \\ (a,3n) = 1}} \mu(a) \sum_{j=1}^{2} \theta(n,3a^2,n + ja^2) \Bigg| 
    \leq \sum_{\substack{a \leq n^A \\ (a,n) > 1}} \mu^2(a) \theta(n,a^2,n)
    \leq n^{A} \log{n}
\end{equation*}
Next, the tabulations in \cite{BennettEtAl, RamareRumely} and \autoref{thm:BennettEtAl} tell us that if $n\geq 4.81\cdot 10^9$ and $(b,n) = 1$, then
\begin{equation}\label{eqn:upper1}
    \Bigg| \theta(n,b,n) - \frac{n}{\varphi(b)} \Bigg| \leq 
    \begin{cases}
        2.072 \sqrt{n} &\text{if $3\leq b \leq 486$ and $4.81\cdot 10^9 \leq n \leq 8\cdot 10^{9}$,} \\
        \frac{n}{160\log{n}} &\text{if $486 < b \leq 10^5$ and $4.81\cdot 10^9 \leq n \leq 8\cdot 10^{9}$,} \\
        \frac{n}{160\log{n}} &\text{if $3 \leq b \leq 10^5$ and $n \geq 8\cdot 10^{9}$.}
    \end{cases}
\end{equation}
Therefore, if $n \geq 4.81\cdot 10^9$, then
\begin{align*}
    \Bigg| \sum_{\substack{3a^2 \leq 10^5 \\ (a,3n) = 1}} \mu(a) \sum_{j=1}^{2} \theta(n,3a^2,n + ja^2) - \frac{n}{2} \sum_{\substack{3a^2 \leq 10^5 \\ (a,3n) = 1}} \frac{\mu(a)}{\varphi(a^2)} \Bigg|
    &\leq \frac{n}{80\log{n}} \sum_{\substack{3a^2 \leq 10^5 \\ (a,3n) = 1}} \mu^2(a) \\
    &\leq \frac{n}{80\log{n}} \sum_{\substack{3a^2 \leq 10^5 \\ (a,3) = 1}} \mu^2(a)
    = \frac{84n}{80\log{n}} .
\end{align*}
Note the additional assumption $(n,3) = 1$ is required to ensure $(3a^2,n) = 1$ in this step. 
Further, \eqref{eqn:BrunTitchmarsh} implies that if $(a,3n) = 1$ and $a \leq n^A$, then
\begin{align*}
    \Bigg| \mu(a) \sum_{j=1}^{2} \theta(n,3a^2,n + ja^2) \Bigg|
    \leq \mu^2(a) \theta(n,a^2,n)
    &\leq \frac{2n\mu^2(a)}{\varphi(a^2)} \left(1 - \frac{2\log{a}}{\log{n}}\right)^{-1} \\
    &\leq \frac{2n\mu^2(a)}{(1 - 2A) \varphi(a^2)} .
\end{align*}
Combining these bounds with our earlier approximation for $S_1(n)$, we see that if $n \geq 4.81\cdot 10^9$, then
\begin{align*}
    \frac{1}{n} \Bigg| S_1(n) - \frac{1}{2} \sum_{\substack{3a^2 \leq 10^5 \\ (a,3n)=1}} \frac{\mu(a)}{\varphi(a^2)} \Bigg|
    &\leq 
    \frac{2}{1-2A} \sum_{\substack{\sqrt{10^5/3} < a \leq n^A \\ (a,3n) = 1}} \frac{\mu^2(a)}{\varphi(a^2)}
    + \frac{84}{80\log{n}} 
    + n^{A-1}\log{n} .
\end{align*}
We also have
\begin{align*}
    \Bigg| \sum_{\substack{3a^2 \leq 10^5 \\ (a,3n)=1}} \frac{\mu(a)}{\varphi(a^2)}
    - \prod_{\substack{p\nmid 3n}} \left(1 - \frac{1}{p(p-1)}\right) \Bigg|
    &\leq \sum_{\substack{3a^2 > 10^5 \\ (a,3n)=1}} \frac{\mu^2(a)}{\varphi(a^2)} .
\end{align*}
Applying these observations in our earlier bound for $S_1(n)$, we see that if $n \geq 4.81\cdot 10^9$, then
\begin{align*}
    \frac{S_1(n)}{n} &\geq 
    \frac{1}{2} \prod_{\substack{p\nmid 3n}} \left(1 - \frac{1}{p(p-1)}\right) - \left(\frac{1}{2} + \frac{2}{1-2A}\right) \sum_{\substack{\sqrt{10^5/3} < a \leq n^A \\ (a,3n) = 1}} \frac{\mu^2(a)}{\varphi(a^2)} 
    - \frac{1}{2} \sum_{\substack{a > n^A \\ (a,3n)=1}} \frac{\mu^2(a)}{\varphi(a^2)} \\
    &\hspace{10cm} - \frac{84}{80\log{n}} - n^{A-1}\log{n} .
\end{align*}
Next, we bound $S_2(n)$ using standard arguments. If $n \geq 4.81\cdot 10^9$, then
\begin{align*}
    |S_2(n)| 
    \leq \log{n} \sum_{\substack{n^A < a \leq \sqrt{n}\\(a,3) = 1}} \mu^2(a) \left(\frac{n}{a^2} + 1\right) 
    &\leq n \log{n} \sum_{n^A < a \leq \sqrt{n}} \frac{\mu^2(a)}{a^2}
    + \log{n} \sum_{n^A < a \leq \sqrt{n}} \mu^2(a) \\
    &\leq n^{1-2A} \log{n}
    + \sqrt{n}\log{n} .
\end{align*}
Therefore, if $n \geq 4.81\cdot 10^9$, then \autoref{thm:bounds_TR} implies
\begin{align*}
    \frac{R_3(n)}{n} &\geq 
    \frac{1}{2} \prod_{\substack{p\nmid 3n}} \left(1 - \frac{1}{p(p-1)}\right) - \left(\frac{1}{2} + \frac{2}{1-2A}\right) \sum_{\substack{\sqrt{10^5/3} < a \leq n^A \\ (a,3n) = 1}} \frac{\mu^2(a)}{\varphi(a^2)} 
    - \frac{1}{2} \sum_{\substack{a > n^A \\ (a,3n)=1}} \frac{\mu^2(a)}{\varphi(a^2)} \\
    &\hspace{7.5cm} - \frac{84}{80\log{n}} - (n^{A-1} + n^{-2A} + n^{-\frac{1}{2}})\log{n} .
\end{align*}
Noting $2/(1-2A) > 1$ by definition and using \eqref{eqn:RamareIneq}, these bounds can be reshaped to the form
\begin{align*}
    \frac{R_3(n)}{n} 
    &\geq \frac{1}{2} \prod_{\substack{p\nmid 3n}} \left(1 - \frac{1}{p(p-1)}\right) - \left(\frac{1}{2} + \frac{2}{1-2A}\right) \sum_{\substack{3a^2 > 10^5 \\ (a,3) = 1}} \frac{\mu^2(a)}{\varphi(a^2)} \\
    &\hspace{8cm}- \frac{84}{80\log{n}} - (n^{A-1} + n^{-2A} + n^{-\frac{1}{2}}) \log{n} \\ 
    &\geq \frac{1}{2} \prod_{\substack{p\nmid 3n}} \left(1 - \frac{1}{p(p-1)}\right) - \left(\frac{1}{2} + \frac{2}{1-2A}\right) \frac{4\sqrt{3}}{10^{5/2}} 
    - \frac{84}{80\log{n}} - (n^{A-1} + n^{-2A} + n^{-\frac{1}{2}}) \log{n} .
\end{align*}
Since $(n,3) = 1$ by assumption, we also have
\begin{align*}
    \frac{1}{2} \prod_{\substack{p\nmid 3n}} \left(1 - \frac{1}{p(p-1)}\right)
    &= \frac{C_{\text{Artin}}}{2}  \prod_{\substack{p\mid 3n}} \left(1 + \frac{1}{p^2 - p - 1}\right) \\
    &= \frac{3 C_{\text{Artin}}}{5}  \prod_{\substack{p\mid n}} \left(1 + \frac{1}{p^2 - p - 1}\right)
    > \frac{3 C_{\text{Artin}}}{5} .
\end{align*}
Therefore, if $n \geq 4.81\cdot 10^9$, then
\begin{align*}
    \frac{R_3(n)}{n} 
    &\geq \frac{3 C_{\text{Artin}}}{5} - \left(\frac{1}{2} + \frac{2}{1-2A}\right) \frac{4\sqrt{3}}{10^{5/2}} 
    - \frac{84}{80\log{n}} - (n^{A-1} + n^{-2A} + n^{-\frac{1}{2}}) \log{n} .
\end{align*}
To maximise this lower bound, we assert $A = 0.18821$, from which it follows that if $n \geq 4.81\cdot 10^9$, then $R_3(n) \geq 0.09067 n$. With this, the advertised result for $R_3(n)$ is proved. The bound for $T_3(n)$ follows naturally from these bounds for $R_3(n)$ by arguing similar lines to earlier.
\end{proof}

\section{Proof of main result}\label{sec:main_results}

In this section, we prove \autoref{thm:main}. To begin, we prove the following lemmas (\autoref{lem:k_nmid_coprime} and \autoref{lem:k_nmid_not_coprime}), which are key ingredients in our proof. 

\begin{lemma}\label{lem:k_nmid_coprime}
Fix an odd square-free $k \geq 3$ that is prime or has two prime factors such that $k\geq 3\cdot 29$. If $n \geq 4.81\cdot 10^9$ such that $(n,k) = 1$, then there exists a prime {$2 < p < n$} such that $n - p \in \mathbb{S}_k$.
\end{lemma}

\begin{proof}
It suffices to prove the result for $k = q_1 q_2$, where $q_2 > q_1 \geq 3$ are primes such that $q_2 \geq 29$, as the result for every prime $k$ follows from \autoref{cor:bounds_TR} and \autoref{thm:bounds_TR_2}. 
For each $k = q_1 q_2$, it suffices to prove
\begin{equation}\label{eqn:sc}
    T_{q_1}(n) 
    > 1 + \sum_{\substack{p<n\\(n-p,q_2)=q_2}} 1
    \quad\text{or}\quad
    R_{q_1}(n) 
    > \log{2} + \sum_{\substack{p<n\\(n-p,q_2)=q_2}} \log{p} .
\end{equation}
We verify \eqref{eqn:sc} holds for all $n\geq 4.81\cdot 10^9$ and $q_2 \geq 29$, splitting our analysis into two cases. 

\medskip\noindent
\textsc{Case I:} To prove the result with $q_2 < 10^7$, observe that the Brun--Titchmarsh theorem \eqref{eqn:BrunTitchmarsh} and \autoref{cor:bounds_TR} or \autoref{thm:bounds_TR_2} certify that if $n\geq 4.81\cdot 10^9$ and $q_2 < 10^7$, then \eqref{eqn:sc} holds when
    \begin{equation*}
        \frac{0.09067 n}{\log{n}} > 1 + \frac{2n}{\varphi(q_2)\log{n}} \left(1 - \frac{\log{q_2}}{\log{n}}\right)^{-1} ,
    \end{equation*}
which is equivalent to the inequality
\begin{equation*}
    \left(0.09067 - \frac{\log{n}}{n}\right) (q_2-1) \left(1 - \frac{\log{q_2}}{\log{n}}\right) > 2 .
\end{equation*}
With this, the result is proved for every $n\geq 4.81\cdot 10^9$ when $29 \leq q_2 < 10^7$. 

\medskip\noindent
\textsc{Case II:} We prove the result with $q_2 \geq 10^7$ in two phases. First, recall there are at most $\lfloor n/q \rfloor$ primes $p < n$ satisfying $p\equiv n\imod{q}$. It follows from this observation and \autoref{cor:bounds_TR} or \autoref{thm:bounds_TR_2} that if $q_2 \geq 10^7$, then \eqref{eqn:sc} holds when
    \begin{equation*}
        \frac{0.09067 n}{\log{n}} > \frac{n}{q_2} + 1 ,
    \end{equation*}
which is equivalent to the inequality
\begin{equation*}
    \left(0.09067 - \frac{\log{n}}{n}\right) q_2 > \log{n} .
\end{equation*}
With this, the result is proved for every $4.81\cdot 10^9 \leq n \leq e^{0.09066 q_2}$ when $q_2 \geq 10^7$. 
Finally, note that $0.09066 q_2 \geq 0.03\sqrt{q_2}(\log{q_2})^3$ when $q_2 \geq 613\,445$. Therefore, if $n > e^{0.09066 q_2}$ and $q_2 \geq 10^7$, then \autoref{thm:BennettEtAl} and \autoref{cor:bounds_TR} or \autoref{thm:bounds_TR_2} imply \eqref{eqn:sc} holds when
\begin{equation*}
    0.09067 n > \log{2} + \frac{n}{q_2-1} + \frac{n}{160\log{n}},
\end{equation*}
which is equivalent to the inequality
\begin{equation*}
    \left(0.09067 - \frac{\log{2}}{n} - \frac{1}{160\log{n}}\right) (q_2 - 1) > 1 .
\end{equation*}
Since $n > e^{0.09066 q_2}$, it follows that \eqref{eqn:sc} holds when
\begin{equation*}
    \left(0.09067 - \frac{\log{2}}{e^{0.09066 q_2}} - \frac{1/160}{0.09066 q_2}\right) (q_2 - 1) > 1 ,
\end{equation*}
which holds for all $q_2 \geq 10^7$. With this, the result is proved for every $n\geq 4.81\cdot 10^9$ when $q_2 \geq 29$.
\end{proof}

\begin{lemma}\label{lem:k_nmid_not_coprime}
Fix an odd square-free $k \geq 3$ with at most two prime factors and let $n \geq 4.81\cdot 10^9$ such that $(n,k) > 1$. There exists a prime {$2 < p < n$} such that $n - p \in \mathbb{S}_k$.
\end{lemma}

\begin{proof}
If $k$ is prime or $k$ has two prime factors such that $(n,k)=k$, there exists a prime {$2 < p < n$} such that $n - p$ is square-free and $n - p \in \mathbb{S}_k$ when
\begin{equation}\label{eqn:SC1}
    R(n) > \log{2} + \sum_{q\mid k} \theta(n,q,n) ,
\end{equation}
wherein $\theta(n,q,n) \leq \log{n}$ for each prime $q\mid k$. In this case, it follows from this observation and \autoref{thm:bounds_TR} that \eqref{eqn:SC1} holds when 
\begin{align*}
    0.32 n > \log{2} + 2\log{n} ,
\end{align*}
which is clearly true for all $n$ under consideration. 
All that remains is to prove the result for $k = q_1 q_2$, where $q_i$ are distinct primes such that $(n,q_1) = 1$ and $(n,q_2) = q_2$. In this case, there exists a prime {$2 < p < n$} such that $n - p$ is square-free and $n - p \in \mathbb{S}_k$ when
\begin{equation}\label{eqn:SC1_alternative}
    R_{q_1}(n) > \log{2} + \theta(n,q_2,n) ,
\end{equation} 
wherein $\theta(n,q_2,n) \leq \log{n}$. This observation and \autoref{cor:bounds_TR} or \autoref{thm:bounds_TR_2} imply that \eqref{eqn:SC1_alternative} holds when 
\begin{align*}
    0.09067 n > \log{2} + \log{n} ,
\end{align*}
which is clearly true for all $n$ under consideration. With this, the result is proved.
\end{proof}

Next, we combine these lemmas with the computations from Section \ref{ssec:computations} to prove \autoref{thm:main1a}.

\begin{proposition}\label{thm:main1a}
Let $k > 1$ be odd and square-free.  
\begin{enumerate}
    \item If $k$ is prime, then every integer $n \geq 4$ can be written as $n = p + \eta$ for some odd prime $p < n$ and $\eta \in \mathbb{S}_k$.
    \item If $k$ has two prime factors such that $k\geq 3\cdot 29$, then every integer $n \geq 36$ can be written as $n = p + \eta$ for some odd prime $p < n$ and $\eta \in \mathbb{S}_k$.
\end{enumerate}
\end{proposition}

\begin{proof}
The computations from Section \ref{ssec:computations} verify the result for every $n \leq 4.81\cdot 10^9$. If $k\geq 3$ is prime or $k\geq 3\cdot 29$ has two prime factors and $n > 4.81\cdot 10^9$, then \autoref{lem:k_nmid_coprime} and \autoref{lem:k_nmid_not_coprime} prove the result. 
\end{proof}

Finally, we are well-positioned to complete the proof of \autoref{thm:main}. 

\begin{proof}[Proof of \autoref{thm:main}]
\autoref{thm:main1a} proves the result for every prime $k > 1$, 
the explicit results in \cite{HathiJohnston} verify the result for every $k < 3\cdot 29$ with two prime factors, and 
\autoref{thm:main1a} proves the result for every odd $k \geq 3\cdot 29$ with two prime factors. 
Therefore, it suffices to prove the result for \textit{even} square-free integers $k \geq 3\cdot 29$ with two prime factors. 

To this end, we write $k = 2q$ for some prime $q \geq 3$. It follows from \autoref{thm:main1a} that for every $n \geq 36$ there exists a prime $2 < p < n$ such that $n - p \in \mathbb{S}_q$. If $n \geq 36$ is even, then we also have $(n-p,k) = 1$, because $n - p$ is odd. Therefore, for every even $n \geq 36$, there exists a prime $2 < p < n$ such that $n - p \in \mathbb{S}_k$. With this, the result is proved.
\end{proof}

\bibliographystyle{amsplain}
\bibliography{refs}
\end{document}